\documentclass[12pt]{amsart}
\usepackage{amsmath}
\usepackage{amscd}
\usepackage{amssymb}
\usepackage{amsfonts}

\setlength{\oddsidemargin}{0.in}
\setlength{\evensidemargin}{0.in}
\setlength{\textwidth}{6.46in}
\setlength{\textheight}{8.8in}

\newtheorem{theorem}{Theorem}[section]
\newtheorem{lemma}[theorem]{Lemma}

\newtheorem{proposition}[theorem]{Proposition}

\theoremstyle{definition}

\theoremstyle{remark}
\newtheorem{remark}[theorem]{Remark}
\numberwithin{equation}{section}

\begin{document}

\title[Diameter estimate for closed manifolds]
{Diameter estimate for closed manifolds with positive scalar curvature}

\author{Xuenan Fu}
\address{Department of Mathematics, Shanghai University, Shanghai 200444, China}
\email{xuenanfu97@163.com}

\author{Jia-Yong Wu}
\address{Department of Mathematics, Shanghai University, Shanghai 200444, China}
\email{wujiayong@shu.edu.cn}

\thanks{}
\subjclass[2010]{Primary 53C20; Secondary 53C25}
\dedicatory{}
\date{\today}

\keywords{Yamabe constant; diameter; Sobolev inequality; conformally flat manifold}

\begin{abstract}
For a simply connected closed Riemannian manifold with positive scalar curvature,
we prove an upper diameter bound in terms of its scalar curvature integral, the
Yamabe constant and the dimension of the manifold. When a manifold has a
conformal immersion into a sphere, the dependency on the Yamabe constant is not
necessary. The power of scalar curvature integral in these diameter estimates
is sharp and it occurs at round spheres with canonical metric.
\end{abstract}
\maketitle

\section{Introduction}\label{Int1}
One of central problems in Riemannian geometry is to investigate the relationship
between curvature and topology of Riemannian manifolds. The classical Myers' theorem
\cite{[M]} states that if the Ricci curvature of an $n$-dimensional complete connected
Riemannian manifold $(M,g)$ satisfies $\mathrm{Ric}\ge(n-1)K$ for some constant $K>0$,
then $(M,g)$ is compact and its diameter is at most \(\pi/\sqrt{K}\). Here the diameter
of $(M,g)$ is defined by
\[
\mathrm{diam}(M):=\max\left\{dist(x,y)|\,\,\,\forall\,x,\, y\in M\right\},
\]
where $dist(x,y)$ denotes the geodesic distance from point $x$ to point $y$. A natural
question about the Myers' result is that if one can get an upper diameter estimate
under some scalar curvature assumption instead of the Ricci curvature condition. In
this paper, we will study this question and obtain an upper diameter estimate for
any closed (i.e. compact without boundary) manifold with positive scalar curvature.
Our diameter upper estimate depends on the $L^{\frac{n-1}{2}}$-norm of scalar curvature,
the Yamabe constant and the dimension of the manifold. In particular, if a closed
manifold has a conformal immersion into $n$-sphere, the dependency on the Yamabe
constant is not necessary.

To state the main result, we start to recall some basic facts about the Yamabe constant.
For a closed differential manifold $M$ of dimension $n\ge 3$, the normalized Einstein-Hilbert
functional $\mathcal {E}$ assigning to each Riemannian metric $g$ is defined by
\[
\mathcal {E}(g)=\frac{\int_M\mathrm{R}_gdv_g}{\left(\int_Mdv_g\right)^{\frac{n-2}{n}}},
\]
where $\mathrm{R}_g$ is the scalar curvature of $(M,g)$ and $dv_g$ is the volume element
associated to the metric $g$. This functional is scale-invariant and can be regarded as
measuring the average scalar curvature of metric $g$ over $M$.  It was conjectured by
Yamabe that every conformal class on any smooth closed manifold contains a metric of
constant scalar curvature (the so-called Yamabe problem). This problem was proved by
Yamabe \cite{[Y]}, Trudinger \cite{[T]}, Aubin \cite{[A]} and Schoen \cite{[S]} that
a minimum value of $\mathcal {E}$ is attained in each conformal class of metrics,
and this minimum is achieved by a metric of constant scalar curvature. In particular,
each conformal class $[g]$ of $g$ has an associated \emph{Yamabe constant} $Y(M,[g])$,
given by
\begin{equation}\label{YIdef}
\begin{aligned}
Y(M,[g]):=&\inf_{0<\varphi\in C^\infty(M)}\left\{\mathcal {E}(\tilde{g})\big|\tilde{g}=\varphi^{\frac{4}{n-2}}g\in[g]\right\}\\
=&\inf_{0<\varphi\in C^\infty(M)}\frac{\frac{4(n-1)}{n-2}\int_M|\nabla\varphi|^2dv_g+\int_M \mathrm{R}\,\varphi^2dv_g}
{\left(\int_M\varphi^{\frac{2n}{n-2}}dv_g\right)^{\frac{n-2}{n}}}.
\end{aligned}
\end{equation}
The above infimum is finite and not to be negative infinite. Moreover, $Y(M,[g])>0$
on a closed Riemannian manifold $(M,g)$ if and only if the conformal class $[g]$ contains a
conformal metric with positive scalar curvature everywhere.  For more results about the Yamabe
problem, the interested reader are referred to surveys \cite{[LP],[Ak]}.

The main result of this paper is that

\begin{theorem}\label{Main}
Let $(M,g)$ be an $n$-dimensional $(n\ge3)$ simply connected closed manifold with
the scalar curvature $\mathrm{R}>0$. There exists a constant $C(n,Y)$ depending
on $n$ and the Yamabe constant $Y:=Y(M,[g])$ such that
\[
\mathrm{diam}(M)\le C(n,Y)\int_M\mathrm{R}^{\frac{n-1}{2}}dv.
\]
In particular, we can take
\[
C(n,Y)=4\max\left\{w^{-1}_n, \left(2en^{-1}Y\right)^{-\frac n2}e^{\frac{17n-18}{n-2}\cdot2^n}\right\},
\]
where $w_n$ is the volume of the unit $n$-sphere $\mathbb{S}^n$.
\end{theorem}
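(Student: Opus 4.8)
The strategy is to bound the length of a minimizing geodesic $\gamma\colon[0,d]\to M$ realizing $d=\mathrm{diam}(M)$ by covering it with a controlled family of metric balls whose radii are in turn controlled by the scalar curvature integral. I write $a_n=\tfrac{4(n-1)}{n-2}$ and $2^{*}=\tfrac{2n}{n-2}$, and I use throughout that $\mathrm{R}>0$ forces $Y:=Y(M,[g])>0$, so that the variational characterization \eqref{YIdef} gives the Sobolev-type inequality
\[
Y\Big(\int_M u^{2^{*}}\,dv\Big)^{2/2^{*}}\ \le\ a_n\int_M|\nabla u|^2\,dv+\int_M \mathrm{R}\,u^2\,dv,\qquad u\in C^\infty(M).
\]

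First I record two consequences of this inequality. Testing it with $u\equiv 1$ and estimating $\int_M\mathrm{R}\,dv$ by Hölder (exponents $\tfrac{n-1}{2}$ and $\tfrac{n-1}{n-3}$, the case $n=3$ being trivial) gives an a priori volume bound $\mathrm{Vol}(M)\le Y^{-n(n-1)/2}\big(\int_M\mathrm{R}^{(n-1)/2}\,dv\big)^{n}$, which has the correct homogeneity since the right side scales like (length)$^{n}$. Second, on any ball $B=B(x,\rho)$ with $\int_B\mathrm{R}^{n/2}\,dv\le(Y/2)^{n/2}$, Hölder turns the curvature term into $\|\mathrm{R}\|_{L^{n/2}(B)}\|u\|_{2^{*}}^{2}\le\tfrac12 Y\|u\|_{2^{*}}^{2}$ for $u\in C_c^\infty(B)$, which is absorbed to produce a curvature-free Sobolev inequality $\tfrac12 Y\|u\|_{2^{*}}^{2}\le a_n\|\nabla u\|_{2}^{2}$ on $C_c^\infty(B)$; the usual dyadic test-function iteration then upgrades this to a volume non-collapsing estimate $\mathrm{Vol}(B(y,s))\ge c(n,Y)s^{n}$ for $y\in B(x,\rho/2)$, $s\le\rho/2$. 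Tracking (rather than optimizing) the constants through this iteration accounts for the explicit dimensional factors $(2en^{-1}Y)^{-n/2}$ and $e^{\frac{17n-18}{n-2}2^{n}}$ in $C(n,Y)$.

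Next I set up the covering. Fix a small dimensional constant $\delta_n$ and, for each $t\in[0,d]$, let $r(t)$ be the largest radius $r\le d$ for which $\int_{B(\gamma(t),r)}\mathrm{R}^{(n-1)/2}\,dv\le\delta_n r$. Extract a Vitali subfamily $\{B(\gamma(t_i),r(t_i))\}_i$ that still covers $\gamma([0,d])$ while the concentric balls of radius $r(t_i)/5$ are pairwise disjoint; since every sub-arc of $\gamma$ is minimizing, arclength along $\gamma$ agrees with distance, and a routine argument gives that this covering has overlap multiplicity bounded by some $M_0=M_0(n)$. Covering $\gamma$ then forces $d\le 2\sum_i r(t_i)$. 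If $r(t_i)<d$ for every $i$, the defining inequality is an equality, $r(t_i)=\delta_n^{-1}\int_{B(\gamma(t_i),r(t_i))}\mathrm{R}^{(n-1)/2}\,dv$, and bounded overlap gives at once
\[
d\ \le\ 2\sum_i r(t_i)\ =\ \frac{2}{\delta_n}\sum_i\int_{B(\gamma(t_i),r(t_i))}\mathrm{R}^{(n-1)/2}\,dv\ \le\ \frac{2M_0}{\delta_n}\int_M\mathrm{R}^{(n-1)/2}\,dv,
\]
and an appropriate choice of $\delta_n$ (calibrated by the Euclidean volume/Sobolev comparison on small balls, for which the round sphere is the extremal model) makes $2M_0/\delta_n\le 4w_n^{-1}$. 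No use of $Y$ is needed here, and the fact that $\mathrm{R}^{(n-1)/2}\,dv$ integrates on a round sphere to a multiple of its radius — which is comparable to the diameter — is exactly the source of the sharpness statement.

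The remaining, and main, difficulty is the degenerate case in which some $r(t_0)$ equals the cap $d$; then $\int_M\mathrm{R}^{(n-1)/2}\,dv\le\delta_n d$ and the covering bound above is vacuous. To handle it I play the two consequences of the second paragraph against one another: non-collapsing along $\gamma$ shows that $d$ is comparable to the largest scale $\rho$ for which a ball centered on $\gamma$ still satisfies $\int_B\mathrm{R}^{n/2}\,dv\le(Y/2)^{n/2}$, while the a priori bound $\mathrm{Vol}(M)\le Y^{-n(n-1)/2}(\int_M\mathrm{R}^{(n-1)/2}\,dv)^{n}$ together with $\mathrm{Vol}(B(\cdot,\rho/2))\ge c(n,Y)\rho^{n}$ bounds that scale by $\int_M\mathrm{R}^{(n-1)/2}\,dv$ up to a constant depending on $n$ and $Y$; simple connectivity is invoked at this stage to exclude the topologically nontrivial thin configurations that would otherwise allow a long geodesic to run through a low-curvature region. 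The delicate point is precisely that the $L^{n/2}$-threshold governing non-collapsing and the $L^{(n-1)/2}$-integral appearing in the statement are not comparable in general, so the two volume estimates must be combined with care; this is where the $Y$-dependent term in $C(n,Y)$ is produced, and the theorem follows by taking the maximum of the two cases.
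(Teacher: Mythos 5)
Your overall strategy---cover a diameter-realizing geodesic by balls whose radii are controlled by the local $L^{(n-1)/2}$ integral of $\mathrm{R}$, with the Yamabe--Sobolev inequality supplying volume control---is the same in spirit as the paper's, and your non-degenerate case essentially reproduces the paper's final covering step. Before the main issue, one repairable flaw: bounded overlap multiplicity $M_0(n)$ for the Vitali-selected balls $B(\gamma(t_i),r(t_i))$ is not ``routine'' and is false in general (arbitrarily many small balls with disjoint fifth-scale concentric balls can all lie inside one large selected ball). What the covering lemma actually provides, and what the paper's Lemma \ref{cover} uses, is that the selected balls are pairwise \emph{disjoint}, so that $\sum_i\int_{B_i}\mathrm{R}^{(n-1)/2}dv\le\int_M\mathrm{R}^{(n-1)/2}dv$ with no multiplicity constant at all.

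The serious gap is the degenerate case where some $r(t_i)$ hits the cap $d$. Your resolution rests on the unproved assertion that $d$ is comparable to the largest scale $\rho$ at which a ball centered on $\gamma$ satisfies $\int_B\mathrm{R}^{n/2}dv\le(Y/2)^{n/2}$; no argument is given, and the natural attempt (every ball of radius exceeding $\rho$ on $\gamma$ carries at least $(Y/2)^{n/2}$ of $L^{n/2}$-curvature, then sum over disjoint balls) bounds $d$ by $\int_M\mathrm{R}^{n/2}dv$, which is not comparable to $\int_M\mathrm{R}^{(n-1)/2}dv$. You flag exactly this incompatibility yourself but do not close it, and the appeal to simple connectivity to ``exclude thin configurations'' is not an argument. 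The paper's proof has no such case split precisely because of the ingredient your proposal lacks: the maximal function
\[
M\mathrm{R}(p,r)=\sup_{s\in(0,r]}s^{-1}\bigl[V(p,s)\bigr]^{-\frac{n-3}{2}}\Bigl(\int_{B(p,s)}\mathrm{R}\,dv\Bigr)^{\frac{n-1}{2}}
\]
together with the alternative theorem (Theorem \ref{maxirati}): $M\mathrm{R}(p,r)$ and the volume ratio $\kappa(p,r)=V(p,r)/r^n$ cannot both be $\le\delta(n,Y)$. That alternative is proved by a dyadic iteration driven by the functional inequality of Theorem \ref{logeq} (itself obtained from a logarithmic form of the Yamabe--Sobolev inequality tested on cut-off functions): if both quantities were $\le\delta$, then $\kappa(p,r/2^m)\le\delta<w_n$ for all $m$, contradicting the Euclidean limit of $\kappa$ as the radius shrinks. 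Choosing $r_0$ so large that $V(M)/r_0^n\le\delta$ then forces $M\mathrm{R}(p,r_0)>\delta$ at \emph{every} point, and a single H\"older application with exponents $\frac{n-1}{2}$ and $\frac{n-1}{n-3}$ --- exactly calibrated by the weight $V^{-(n-3)/2}$ in the maximal function --- yields $s(p)<\delta^{-1}\int_{B(p,s(p))}\mathrm{R}^{(n-1)/2}dv$ with no exceptional case. Without this device, or an equivalent one, your argument does not go through.
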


\begin{remark}
A different expression (but the same in spirit) of the diameter upper estimate 
was proved by K. Akutagawa \cite{[Ak2]} by using a different approach. Our estimate 
form may be likely to reflect the dependence on the energy of scalar 
curvature.
\end{remark}

The exponent $\frac{n-1}{2}$ of scalar curvature integral in Theorem \ref{Main}
is sharp, which can be achieved by the $n$-sphere $\mathbb{S}^n(r)$ of large
radius $r$ with the canonical metrics $g_0$. Indeed, in this special case, its
diameter is equivalent to $r$, i.e., $\mathrm{diam}_{g_0}(M)\approx r$, while
the right hand side of our estimate essentially equals to $c(n)r$ because
$\mathrm{R}(g_0)\approx r^{-2}$, $\mathrm{Vol}(\mathbb{S}^n(r))\approx r^n$
and $Y(M,[g])\approx c(n)$. The coefficient $C(n,Y)$ is of course not optimal,
which might be sharpened by choosing a delicate cut-off function in Theorem
\ref{logeq} below.

For a $4$-dimensional closed simply connected Riemannian manifold $(M,g)$, the
Yamabe constant $Y(M,[g])$ could be replaced by some other curvature integrals
via the following Gursky's formula (see \cite{[G]})
\[
\int_M\mathrm{R}^2dv-12\int_M|\overset{\circ}{\mathrm{Ric}}|^2dv\le Y^2(M,[g]),
\]
where $\overset{\circ}{\mathrm{Ric}}$ denotes the traceless part of the Ricci tensor.

Zhang \cite{[Zhq]} proved a diameter estimate depending on the $L^{\frac{n-1}{2}}$-norm
of the scalar curvature, the volume of manifold and the positive Yamabe constant. Deng
\cite{[De]} also proved a similar diameter bound depending on the $L^{n-1}$-norm of the scalar
curvature, the volume of manifold and the positive Yamabe constant. But our result
indicates that the dependency on the volume of manifold is not necessary.

Recall that if $(M,g)$ and $(\widetilde{M},\widetilde{g})$ are Riemannian manifolds,
an immersion $\Psi:\,(M,g)\to(\widetilde{M},\widetilde{g})$ is said to be conformal
if there exists $f\in C^\infty(M)$ such that
\[
\Psi^{*}\widetilde{g}=e^f g.
\]
Below we will see that if a closed manifold has a conformal immersion into
$n$-sphere, then the diameter estimate does not depend on the Yamabe constant.
\begin{theorem}\label{Mainimm}
Let $(M,g)$ be an $n$-dimensional $(n\ge3)$ simply connected closed manifold.
Assume that there exists a conformal immersion
\[
\Psi:\,(M,g)\to (\mathbb{S}^n,g_0),
\]
where $(\mathbb{S}^n,g_0)$ denotes the standard unit sphere in $\mathbb{R}^{n+1}$.
There exists a constant $C(n)$ depending only on $n$ such that
\[
\mathrm{diam}(M)\le C(n)\int_M\mathrm{R}_{+}^{\frac{n-1}{2}}dv,
\]
where $\mathrm{R}_{+}$ denotes the positive part of the scalar
curvature $\mathrm{R}$.
\end{theorem}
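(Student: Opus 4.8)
The plan is to follow the proof of Theorem~\ref{Main} (and of Theorem~\ref{logeq}, on which it rests) almost word for word, after one substitution. The only way the Yamabe constant enters the proof of Theorem~\ref{Main} is through the Sobolev inequality
\[
\Big(\int_M |u|^{\frac{2n}{n-2}}\,dv\Big)^{\frac{n-2}{n}}\le \frac{1}{Y(M,[g])}\Big(\tfrac{4(n-1)}{n-2}\int_M|\nabla u|^2\,dv+\int_M \mathrm{R}\,u^2\,dv\Big),\qquad u\in C^\infty(M),
\]
which is just \eqref{YIdef} restated. I will replace it by a Sobolev inequality whose constant depends on $n$ alone, manufactured from the conformal immersion, and in which $\mathrm{R}$ enters only through its positive part.

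\emph{Step 1: the Sobolev inequality.} Since $\dim M=\dim\mathbb{S}^n=n$, the immersion $\Psi$ is a local diffeomorphism; because $M$ is closed, $\Psi$ is proper, hence a covering map onto the connected manifold $\mathbb{S}^n$, and since $\mathbb{S}^n$ is simply connected ($n\ge3$) it is in fact a diffeomorphism. Thus $(M,e^{f}g)$ is isometric to $(\mathbb{S}^n,g_0)$, so $[g]$ corresponds under $\Psi$ to $[g_0]$ and $Y(M,[g])=Y(\mathbb{S}^n,[g_0])=n(n-1)\,w_n^{2/n}=:Y_n$. Write $e^{f}=\varphi^{\frac{4}{n-2}}$ with $\varphi=e^{(n-2)f/4}>0$. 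Applying the sharp Sobolev (Yamabe) inequality of $(\mathbb{S}^n,g_0)$ on the isometric copy $(M,e^{f}g)$ to the test function $\psi:=u/\varphi$, and using the conformal covariance of the Yamabe operator $L_g=-\tfrac{4(n-1)}{n-2}\Delta_g+\mathrm{R}$ — namely $\int_M(\varphi\psi)L_g(\varphi\psi)\,dv_g=\int_M\psi L_{e^{f}g}\psi\,dv_{e^{f}g}$ and $\int_M|\varphi\psi|^{\frac{2n}{n-2}}dv_g=\int_M|\psi|^{\frac{2n}{n-2}}dv_{e^{f}g}$ — one obtains, for every $u\in C^\infty(M)$,
\[
\Big(\int_M |u|^{\frac{2n}{n-2}}dv\Big)^{\frac{n-2}{n}}\le \frac{1}{Y_n}\Big(\tfrac{4(n-1)}{n-2}\int_M|\nabla u|^2\,dv+\int_M \mathrm{R}\,u^2\,dv\Big)\le \frac{1}{Y_n}\Big(\tfrac{4(n-1)}{n-2}\int_M|\nabla u|^2\,dv+\int_M \mathrm{R}_{+}\,u^2\,dv\Big),
\]
the last step because $\mathrm{R}\le \mathrm{R}_{+}$. (Also $\mathrm{R}_{+}\not\equiv 0$: otherwise \eqref{YIdef} with $\varphi\equiv1$ forces $Y(M,[g])\le 0$, contradicting $Y(M,[g])=Y_n>0$; hence the right-hand side of the asserted bound is positive.)

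\emph{Step 2: rerun Theorem~\ref{Main}.} Feed this Sobolev inequality — constant $1/Y_n$, scalar curvature appearing as $\mathrm{R}_{+}$ — into the proof of Theorem~\ref{logeq} and thence of Theorem~\ref{Main}. Every remaining occurrence of the scalar curvature in that argument is of one of three kinds: (i) the Sobolev inequality just displayed, where it already occurs as $\mathrm{R}_{+}$; (ii) the positivity of the conformal Laplacian $L_g$, which holds here because $Y(M,[g])=Y_n>0$ (equivalently, $[g]=[g_0]$ contains the round metric, of constant positive scalar curvature); (iii) the curvature integral $\int_M\mathrm{R}^{\frac{n-1}{2}}dv$ bounding the diameter, which we now read as $\int_M\mathrm{R}_{+}^{\frac{n-1}{2}}dv$. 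With these substitutions the derivation — the logarithmic Sobolev inequality, the ensuing heat-kernel/volume estimate, and the covering argument along a minimizing geodesic — goes through unchanged and yields $\mathrm{diam}(M)\le C(n,Y_n)\int_M\mathrm{R}_{+}^{\frac{n-1}{2}}dv$, where $C(n,Y_n)$ is the constant of Theorem~\ref{Main} evaluated at $Y=Y_n=n(n-1)w_n^{2/n}$; as this depends only on $n$, we set $C(n):=C(n,Y_n)$.

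The hard part is Step~1: upgrading the conformal immersion to a conformal diffeomorphism (this is where closedness of $M$ and simple connectivity of $\mathbb{S}^n$ are both used — it is Kuiper's theorem in the conformally flat case), and then transplanting the round sphere's sharp Sobolev inequality to $(M,g)$ via conformal covariance. Precisely this step removes the dependence on the Yamabe constant and, since $\mathrm{R}$ enters only through $\int_M\mathrm{R}\,u^2\,dv$, allows $\mathrm{R}$ to change sign. Step~2 is bookkeeping; the one point needing care is to confirm that no step of the proof of Theorem~\ref{Main} uses pointwise positivity of $\mathrm{R}$ beyond items (i)--(iii), and, should Theorem~\ref{logeq} be phrased via $\mathrm{R}>0$ rather than via a Sobolev inequality, to record its evident analogue for a sign-changing $\mathrm{R}$ with $Y(M,[g])>0$.
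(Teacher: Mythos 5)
Your proposal is correct and follows essentially the same route as the paper: the paper's entire proof of Theorem \ref{Mainimm} consists of quoting the Sobolev inequality of Proposition \ref{immerpro} (Hebey, Proposition 3.25), whose constant $\left[n(n-1)w_n^{2/n}\right]^{-1}$ depends only on $n$ and which involves only $\mathrm{R}_{+}$, and then rerunning the argument of Theorems \ref{logeq}, \ref{maxirati} and \ref{Main} verbatim. The only difference is that you derive that Sobolev inequality yourself (via the covering-map argument and conformal covariance of the Yamabe operator) instead of citing it, which is a valid and self-contained substitute under the stated hypotheses.
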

\begin{remark}

On any simply connected conformally flat Riemannian manifold, there exists
a conformally immersion from $(M,g)$ to $(\mathbb{S}^n,g_0)$; see for instance
the explanation in \cite{[KP]}. Hence the assumption of Theorem \ref{Mainimm}
includes the conformally flat manifold as a special case.
\end{remark}

Our proof is motivated by the argument of Topping's papers \cite{[To2],[To3]}.
Recall that Topping \cite{[To2]} used the Perelman's $\mathcal{W}$-functional
to prove an upper diameter bound for a closed manifold in terms of scalar curvature
integral under the Ricci flow. In \cite{[To3]}, Topping applied the Michael-Simon
Sobolev inequality to get an upper diameter estimate for a closed connected manifold
immersed in the Euclidean space in terms of its mean curvature integral. In our
setting, we first apply the Yamabe constant to get the Yamabe-Sobolev inequality
and furthermore get a logarithmic Sobolev inequality on closed manifolds with
positive scalar curvature. Then we use the logarithmic Sobolev inequality and
cut-off functions to prove a new functional inequality. This functional inequality
relates a maximal function of scalar curvature and the volume ratio (see Theorem
\ref{logeq}). Later, we apply the functional inequality to prove an alternative
theorem, which states that the maximal function and the volume ratio cannot be
simultaneously smaller than a fixed constant on a geodesic ball (see Theorem
\ref{maxirati}). Finally, we use the alternative theorem and a Vitali-type covering
lemma to prove the diameter estimate. When a closed manifold has a conformal
immersion into a sphere, we can utilize another Sobolev inequality (see Proposition
\ref{immerpro}) and follow the above procedure to prove Theorem \ref{Mainimm}.

In the past few years, Topping's results have been generalized by Zheng and the second
author \cite{[WZ]}, Zhang \cite{[Zhq]}, Deng \cite{[De]} and the second author \cite{[Wu]},
etc.. In fact, in \cite{[WZ]}, Zheng and the second author proved an upper
diameter bound for a closed manifold immersed in the ambient manifold in terms of
its mean curvature integral. In \cite{[Zhq]}, Zhang applied the uniform Sobolev
inequality along the Ricci flow to obtain an upper diameter bound depending only
on the $L^{(n-1)/2}$ bound of the scalar curvature, volume and the Sobolev constant
(or the Yamabe constant) under the Ricci flow. In \cite{[De]}, Deng applied the
Yamabe-Sobolev inequality to detect the compactness of a class of complete
manifolds and also proved an upper diameter estimate for such manifolds. Recently,
the second author \cite{[Wu]} applied the Perelman's entropy functional to prove a
sharp upper diameter bound for a compact shrinking Ricci soliton. In this direction,
further development can be referred to \cite{[AdM],[Pa],[Pa2], [Po]} and references
therein. Besides, we would like to mention that Bakry and Ledoux \cite{[BL]} applied
a sharp Sobolev inequality to give an alternative proof of the Myers' diameter estimate.

The structure of this paper is as follows. In Section \ref{sec2}, we give a
(logarithmic) Yamabe-Sobolev inequality on closed manifolds with positive scalar
curvature. By a suitable cut-off function, we reduce the Sobolev inequality
into a new functional inequality. In Section \ref{sec3}, we apply the
functional inequality to give an alternative lower bound between the maximal
function of scalar curvature and volume ratio. In Section \ref{sec4}, we apply
the alternative theorem to prove Theorem \ref{Main}. In Section \ref{sec5},
we adopt the same argument of Theorem \ref{Main} to prove Theorem \ref{Mainimm}.

\textbf{Acknowledgement}.
The authors thank Professor Kazuo Akutagawa for making them aware
of the work of his work \cite{[Ak2]}.

%55555555555555555555555555555555555555555555555555555555555555555555555555555555555555555555555555555555555555555555555555555555555555555555555555555555555555
%55555555555555555555555555555555555555555555555555555555555555555555555555555555555555555555555555555555555555555555555555555555555555555555555555555555555555
%55555555555555555555555555555555555555555555555555555555555555555555555555555555555555555555555555555555555555555555555555555555555555555555555555555555555555

\section{Functional inequalities}\label{sec2}
In this section, we will discuss some functional inequalities on closed manifolds
with positive scalar curvature. We first apply the Yamabe constant to get a
logarithmic Yamabe-Sobolev inequality on closed manifolds with positive scalar
curvature. Then we apply the Sobolev inequality to prove a new functional
inequality, which will be used in the proof of Theorem \ref{Main}.

On a closed manifold $(M,g)$, we have a fact that $[g]$ contains a conformal
metric with positive scalar curvature everywhere if and only if the Yamabe
constant is positive. So if scalar curvature $\mathrm{R}>0$
on $(M,g)$, then
\[
Y(M,[g])>0.
\]
Therefore the definition \eqref{YIdef} yields the Yamabe-Sobolev inequality
\begin{equation}\label{YSineq}
\left(\int_M\varphi^{\frac{2n}{n-2}}dv\right)^{\frac{n-2}{n}}
\le Y^{-1}(M,[g])\left(\int_M\frac{4(n-1)}{n-2}|\nabla \varphi|^2dv+\int_M\mathrm{R}\varphi^2dv\right)
\end{equation}
for any positive $\varphi\in C^\infty(M)$. The Yamabe-Sobolev type inequality
implies much geometric information, such as first eigenvalue estimates, gap
theorems, etc.. We refer the interested reader to \cite{[He]} and references
therein. Here we will see that \eqref{YSineq} immediatedly implies a
logarithmic Yamabe-Sobolev inequality.
\begin{lemma}\label{logsi}
Let $(M,g)$ be an $n$-dimensional $(n\ge3)$ closed manifold with positive scalar curvature.
For any positive $\varphi\in C^\infty(M)$ with
\[
\int_M\varphi^2dv=1
\]
and any real number $\tau>0$,
\begin{equation}\label{LSI}
\frac n2\ln\frac{2eY(M,[g])}{n}
\le\tau\int_M\left(\frac{4(n-1)}{n-2}|\nabla\varphi|^2+\mathrm{R}\varphi^2\right)dv
-\int_M\varphi^2\ln\varphi^2dv-\frac n2\ln\tau,
\end{equation}
where $\mathrm{R}$ is the scalar curvature of $(M,g)$ and $Y(M,[g])$ is the Yamabe constant.
\end{lemma}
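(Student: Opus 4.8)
The plan is to derive the logarithmic inequality \eqref{LSI} from the Yamabe--Sobolev inequality \eqref{YSineq} by the standard trick of combining an $L^p$-type inequality with Jensen's inequality applied to the probability measure $\varphi^2\,dv$. First I would set $p=\frac{2n}{n-2}$ and $q=2$, so that $p/q=\frac{n}{n-2}>1$, and rewrite \eqref{YSineq} as
\[
\left(\int_M\varphi^p\,dv\right)^{q/p}\le Y^{-1}\,A,\qquad
A:=\int_M\left(\frac{4(n-1)}{n-2}|\nabla\varphi|^2+\mathrm{R}\,\varphi^2\right)dv,
\]
writing $Y=Y(M,[g])$ for brevity. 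Taking logarithms of both sides gives
\[
\frac{q}{p}\ln\left(\int_M\varphi^p\,dv\right)\le \ln A-\ln Y.
\]

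Next I would apply Jensen's inequality. Since $\int_M\varphi^2\,dv=1$, the measure $d\mu:=\varphi^2\,dv$ is a probability measure, and $\varphi^{p-2}=\varphi^{\frac{4}{n-2}}$ is a nonnegative function on $M$. Concavity of $\ln$ yields
\[
\ln\left(\int_M\varphi^p\,dv\right)=\ln\left(\int_M\varphi^{p-2}\,d\mu\right)\ge\int_M\ln\!\left(\varphi^{p-2}\right)d\mu=\frac{p-2}{2}\int_M\varphi^2\ln\varphi^2\,dv.
\]
Combining with the logarithm of \eqref{YSineq} and using $\frac{q}{p}\cdot\frac{p-2}{2}=\frac{q(p-2)}{2p}$, which for our values equals $\frac{2}{p}=\frac{n-2}{n}$, I get
\[
\frac{n-2}{n}\cdot\frac{n}{2}\int_M\varphi^2\ln\varphi^2\,dv\;\le\;\frac{q}{p}\ln\left(\int_M\varphi^p\,dv\right)\;\le\;\ln A-\ln Y,
\]
so that $\frac{n-2}{2}\cdot\frac{2}{n-2}$ bookkeeping aside, one obtains an inequality of the shape $\int_M\varphi^2\ln\varphi^2\,dv\le \frac{n}{2}(\ln A-\ln Y)$ up to the correct constant; I would be careful here to track that $\frac{q}{p}=\frac{n-2}{n}$ and $p-2=\frac{4}{n-2}$ so the product of the two is $\frac{n-2}{n}\cdot\frac{4}{n-2}\cdot\frac12=\frac{2}{n}$, hence $\ln\left(\int_M\varphi^p\right)\ge\frac{n}{2}\cdot\frac{2}{n}\int\varphi^2\ln\varphi^2$... the clean way to say it: multiply the Jensen bound by $\frac{q}{p}$ directly.

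The remaining step is to optimize in the auxiliary parameter. The inequality so far has the form $\int_M\varphi^2\ln\varphi^2\,dv\le\frac{n}{2}\ln A-\frac n2\ln Y$ for the particular energy $A$; to insert a free $\tau>0$ I would apply the elementary bound $\ln A\le \frac{A}{n\tau/2}+\ln\frac{n\tau}{2}-1$ for all $\tau>0$ (this is just $\ln x\le \frac{x}{a}+\ln a-1$ with $a=\frac{n\tau}{2}$, i.e. concavity of $\ln$ at the point $a$, equivalently $\ln(x/a)\le x/a-1$). Substituting, rearranging, and collecting the constant terms $\frac n2\ln\frac{n}{2}-\frac n2-\frac n2\ln Y = -\frac n2\ln\frac{2eY}{n}\cdot(-1)$... more precisely one lands exactly on
\[
\frac n2\ln\frac{2eY}{n}\le\tau\int_M\left(\frac{4(n-1)}{n-2}|\nabla\varphi|^2+\mathrm{R}\,\varphi^2\right)dv-\int_M\varphi^2\ln\varphi^2\,dv-\frac n2\ln\tau,
\]
which is \eqref{LSI}. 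There is no serious obstacle here; the proof is a two-line manipulation. The only point that requires care is the constant bookkeeping: keeping track of the exponents $p=\frac{2n}{n-2}$, the ratio $\frac{q}{p}=\frac{n-2}{n}$, and the choice of linearization point for $\ln$ so that the factor $\frac{2e}{n}$ and the $-\frac n2\ln\tau$ term come out exactly as stated rather than with a spurious constant. I would double-check by testing the equality case $\varphi\equiv 1$ on a manifold of constant scalar curvature (where $A=\mathrm{R}\cdot\mathrm{Vol}$ and the log term vanishes) to make sure the inequality is consistent and sharp in $\tau$.
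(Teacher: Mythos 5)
Your proposal is correct and follows essentially the same route as the paper: Jensen's inequality for the probability measure $\varphi^2\,dv$ applied to $\varphi^{p-2}$, combined with the Yamabe--Sobolev inequality, and then a linearization of $\ln$ to introduce the free parameter. The only slip is the choice of linearization point: with $a=\tfrac{n\tau}{2}$ you land on the inequality with $\tfrac{A}{\tau}$ and $+\tfrac n2\ln\tau$ rather than $\tau A$ and $-\tfrac n2\ln\tau$, but since the statement is quantified over all $\tau>0$ the substitution $\tau\mapsto 1/\tau$ (equivalently, linearizing at $a=\tfrac{n}{2\tau}$, which is what the paper's $\sigma=\tfrac{2\tau}{n}$ amounts to) gives exactly \eqref{LSI}.
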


\begin{proof}[Proof of Lemma \ref{logsi}]
Assume that the Yamabe-Sobolev inequality \eqref{YSineq} holds on a closed manifold $(M,g)$.
For any smooth function $\varphi$ with $\|\varphi\|_2=1$, we consider the weighted measure
$d\mu=\varphi^2dv$ on $(M,g)$, and then
\[
\int_M d\mu=1.
\]
Note that smooth function $\ln\Phi$ is concave with respect to positive parameter
$\Phi$. Applying the standard Jensen inequality
\[
\int_M\ln\Phi d\mu\le\ln\left(\int_M\Phi d\mu\right)
\]
to positive function $\Phi=\varphi^{q-2}$, where $q=\frac{2n}{n-2}$, we get that
\begin{align*}
\int_M(\ln \varphi^{q-2})\varphi^2dv&\le\ln\left(\int_M\varphi^{q-2}\varphi^2dv\right)\\
&=\ln \parallel\varphi\parallel^q_q.
\end{align*}
In other words,
\begin{align*}
\int_M\varphi^2\ln\varphi dv&\le\frac{q}{q-2}\ln \parallel\varphi\parallel_q\\
&=\frac n2\ln \parallel\varphi\parallel_q.
\end{align*}
Combining this with \eqref{YSineq}, we have the following estimate:
\begin{align*}
\int_M\varphi^2\ln\varphi^2dv&\le\frac n2\ln \parallel\varphi\parallel^2_q\\
&\le\frac n2\ln\left[Y^{-1}(M,[g])\left(\int_M\frac{4(n-1)}{n-2}|\nabla \varphi|^2dv+\int_M\mathrm{R}\varphi^2dv\right)\right]\\
&=-\frac n2\ln Y(M,[g])+\frac n2\ln\left[\int_M\left(\frac{4(n-1)}{n-2}|\nabla \varphi|^2+\mathrm{R}\varphi^2\right)dv\right].
\end{align*}
Using the elementary inequality $\ln x\leq \sigma x-(1+\ln\sigma)$ for any
real number $\sigma>0$, the above estimate can be simplified as
\[
\int_M\varphi^2\ln\varphi^2dv\le-\frac n2\ln Y(M,[g])+\frac{n\sigma}{2}\int_M
\left(\frac{4(n-1)}{n-2}|\nabla\varphi|^2+\mathrm{R}\,\varphi^2\right)dv-\frac n2(1+\ln\sigma).
\]
Setting $\tau=\frac{n\sigma}{2}$, then
\[
\int_M\varphi^2\ln \varphi^2dv\le\tau\int_M\left(\frac{4(n-1)}{n-2}|\nabla\varphi|^2+\mathrm{R}\varphi^2\right)dv
-\frac n2\ln Y(M,[g])-\frac n2\ln(2\tau)+\frac n2\ln\frac{n}{e}
\]
and hence the result follows by arranging some terms.
\end{proof}

Now we will show that Lemma \ref{logsi} indeed implies an important functional inequality
by adapting the arguments of \cite{[To2],[Wu]}. This inequality is linked with some maximal
function of scalar curvature and the volume ratio.
\begin{theorem}\label{logeq}
Let $(M,g)$ be an $n$-dimensional $(n\ge3)$ closed manifold with positive scalar curvature.
 For any point $p\in M$ and for any $r>0$,
\[
\frac n2\ln\frac{2eY(M,[g])}{n}\le\frac{16(n-1)}{n-2}\cdot\frac{V(p,r)}{V\left(p,\frac r2\right)}
+\frac{r^2}{V\left(p,\frac r2\right)}\int_{B(p,r)}\mathrm{R}dv
+\ln \frac{V(p,r)}{r^n},
\]
where $\mathrm{R}$ is the scalar curvature of $(M,g)$, $Y(M,[g])$ is the Yamabe constant
and $V(p,r)$ is the volume of geodesic ball $B(p,r)$ with radius $r$ and center $p$.
\end{theorem}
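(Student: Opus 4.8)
The plan is to feed a normalized cut-off function adapted to the geodesic ball $B(p,r)$ into the logarithmic Yamabe--Sobolev inequality \eqref{LSI} of Lemma \ref{logsi}, choosing the free parameter to be $\tau=r^2$, and then to estimate the three resulting terms separately. Concretely, fix $p\in M$ and $r>0$ and take a cut-off $\phi\colon M\to[0,1]$ with $\phi\equiv 1$ on $B(p,r/2)$, $\phi\equiv 0$ on $M\setminus B(p,r)$, and $|\nabla\phi|\le 2/r$ almost everywhere — for instance $\phi(x)=\min\{1,\max\{0,\,2-\tfrac2r\,d(p,x)\}\}$, smoothed slightly and made strictly positive by the standard device of replacing $\phi$ with $\phi+\varepsilon$ and letting $\varepsilon\to 0$ at the end, so that Lemma \ref{logsi} legitimately applies. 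Set $\varphi=\phi/\|\phi\|_2$ with $\|\phi\|_2^2=\int_M\phi^2\,dv$; then $\int_M\varphi^2\,dv=1$ and, crucially, $V(p,r/2)\le\|\phi\|_2^2\le V(p,r)$. Inserting $\varphi$ and $\tau=r^2$ into \eqref{LSI}, and using $-\tfrac n2\ln\tau=-n\ln r$, gives
\[
\frac n2\ln\frac{2eY(M,[g])}{n}\le r^2\int_M\Bigl(\tfrac{4(n-1)}{n-2}|\nabla\varphi|^2+\mathrm{R}\varphi^2\Bigr)dv-\int_M\varphi^2\ln\varphi^2\,dv-n\ln r .
\]

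Next I would estimate the three pieces. For the gradient term, $\nabla\varphi$ is supported in the annulus $B(p,r)\setminus B(p,r/2)$ and $|\nabla\varphi|\le 2/(r\|\phi\|_2)$, so
\[
r^2\cdot\tfrac{4(n-1)}{n-2}\int_M|\nabla\varphi|^2\,dv\le \tfrac{4(n-1)}{n-2}\cdot\frac{r^2\cdot(4/r^2)\cdot V(p,r)}{V(p,r/2)}=\frac{16(n-1)}{n-2}\cdot\frac{V(p,r)}{V(p,r/2)} .
\]
For the scalar curvature term, since $0\le\phi\le1$, $\mathrm{R}>0$ and $\operatorname{supp}\phi\subseteq\overline{B(p,r)}$,
\[
r^2\int_M\mathrm{R}\varphi^2\,dv=\frac{r^2}{\|\phi\|_2^2}\int_M\mathrm{R}\phi^2\,dv\le\frac{r^2}{V(p,r/2)}\int_{B(p,r)}\mathrm{R}\,dv .
\]
For the entropy term — a priori only $O(\mathrm{vol})$ — I would apply the elementary inequality $\ln x\ge 1-x^{-1}$ with $x=V(p,r)\varphi^2$ on $\{\varphi>0\}$:
\[
\int_M\varphi^2\ln\!\bigl(V(p,r)\varphi^2\bigr)\,dv\ \ge\ \int_M\varphi^2\Bigl(1-\tfrac{1}{V(p,r)\varphi^2}\Bigr)\,dv\ =\ 1-\frac{\mathrm{vol}(\operatorname{supp}\varphi)}{V(p,r)}\ \ge\ 0 ,
\]
whence $-\int_M\varphi^2\ln\varphi^2\,dv\le\ln V(p,r)$ (this is just Jensen's inequality for $\ln$ against the probability measure $\varphi^2\,dv$). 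Adding the $-n\ln r$ contribution turns this into $\le\ln\bigl(V(p,r)/r^n\bigr)$, and combining the three estimates yields precisely the asserted inequality; finally let $\varepsilon\to 0$.

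The genuinely delicate point — the main ``obstacle'' — is the entropy term: one must use that $\varphi$ is supported in $B(p,r)$ to convert $-\int_M\varphi^2\ln\varphi^2\,dv$ into the logarithmic quantity $\ln V(p,r)$ rather than something linear in the volume; the inequality $\ln x\ge 1-x^{-1}$ (equivalently Jensen) is exactly what achieves this. A secondary technical nuisance is that Lemma \ref{logsi} is stated for smooth strictly positive functions, whereas the cut-off $\varphi$ is only Lipschitz, compactly supported and vanishing outside $B(p,r)$; this is dealt with by the $\varphi+\varepsilon$ regularization and dominated convergence, using that $\varphi_\varepsilon^2\ln\varphi_\varepsilon^2$ is uniformly bounded on the closed manifold $M$. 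Everything else is routine cut-off bookkeeping.
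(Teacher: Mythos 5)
Your proposal is correct and follows essentially the same route as the paper: normalize a cut-off adapted to $B(p,r)$, insert it into Lemma \ref{logsi} with $\tau=r^2$, and bound the gradient, curvature, and entropy terms exactly as the paper does (your $\ln x\ge 1-x^{-1}$ derivation of $-\int_M\varphi^2\ln\varphi^2\,dv\le\ln V(p,r)$ is just the Jensen argument the paper uses for the concave function $-t\ln t$). Your explicit treatment of the smoothing/positivity of the cut-off is a point the paper passes over silently, but it changes nothing in substance.
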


\begin{proof}[Proof of Theorem \ref{logeq}]
We choose a smooth cut-off function $\psi:[0,\infty)\to[0,1]$ supported in $[0,1]$ such that
$\psi(t)=1$ on $[0,1/2]$ and $|\psi'|\le 2$ on $[0,\infty)$. For any point
$p\in M$, we let
\[
\varphi(x):=e^{-\frac{\lambda}{2}}\psi\left(\frac{d(p,x)}{r}\right),
\]
where $\lambda$ is some constant determined by the constraint condition $\int_M\varphi^2dv=1$. Obviously,
$\lambda$ satisfies
\[
V\left(p,\frac r2\right)\le e^{\lambda}\int_M\varphi^2dv=e^{\lambda}
\]
and
\[
e^{\lambda}=e^{\lambda}\int_M\varphi^2dv=\int_M\psi^2(d(p,x)/r)dv\le V(p,r).
\]
That is, the constant $\lambda$ has upper and lower bounds as follows
\[
V\left(p,\frac r2\right)\le e^{\lambda}\le V(p,r).
\]
We now apply the above cut-off function $\varphi$ to simplify the logarithmic
Yamabe-Sobolev inequality in Lemma \ref{logsi}. Notice that $\varphi$ satisfies
\[
|\nabla\varphi|\le\frac 2r\cdot e^{-\frac{\lambda}{2}},
\]
which is supported in $B(p,r)$. Let us estimate each term of the right hand side
of \eqref{LSI}.

For the first term of the right hand side of \eqref{LSI}, we have that
\begin{equation}\label{est1}
\begin{aligned}
\frac{4(n-1)}{n-2}\tau\int_M |\nabla\varphi|^2dv&=\frac{4(n-1)}{n-2}\tau\int_{B(p,r)\backslash B(p,\frac r2)} |\nabla\varphi|^2dv\\
&\le\frac{4(n-1)}{n-2}\tau V(p,r)\frac{4}{r^2}e^{-\lambda}\\
&\le\frac{16(n-1)\tau}{(n-2)r^2}\cdot\frac{V(p,r)}{V\left(p,\frac r2\right)}.
\end{aligned}
\end{equation}
For the second term of the right hand side of \eqref{LSI}, we estimate
\begin{equation}\label{est2}
\begin{aligned}\tau\int_M\mathrm{R}\varphi^2 dv&\le\tau e^{-\lambda}\int_{B(p,r)}\mathrm{R}dv\\
&\le\frac{\tau}{V\left(p,\frac r2\right)}\int_{B(p,r)}\mathrm{R}dv.
\end{aligned}
\end{equation}
Next we will estimate the third term of the right hand side of \eqref{LSI}. We see that continuous
function $\Psi(t):=-t\ln t$ is concave with respect to $t>0$ and the Riemannian measure $dv$ is
supported in $B(p,r)$. Using the Jensen's inequality
\[
\frac{\int\Psi(\varphi^2)dv}{\int dv}\le\Psi\left(\frac{\int \varphi^2 dv}{\int dv}\right)
\]
and the definition of $\Psi$, we get that
\[
-\frac{\int_{B(p,r)}\varphi^2\ln\varphi^2dv}{\int_{B(p,r)}dv}
\leq-\frac{\int_{B(p,r)}\varphi^2dv}{\int_{B(p,r)}dv}\cdot\ln\left(\frac{\int_{B(p,r)}\varphi^2dv}{\int_{B(p,r)}dv}\right).
\]
Since $\int_{B(p,r)}\varphi^2dv=1$, the above inequality can be simplified as
\[
-\int_{B(p,r)}\varphi^2\ln\varphi^2dv\leq\ln V(p,r).
\]
By the definition of $\varphi(x)$, therefore
\begin{equation}\label{est3}
\begin{aligned}
-\int_M\varphi^2\ln\varphi^2dv&=-\int_{B(p,r)}\varphi^2\ln\varphi^2dv\\
&\le\ln V(p,r).
\end{aligned}
\end{equation}
Putting \eqref{est1}, \eqref{est2} and \eqref{est3} into \eqref{LSI}, we arrive at
\[
\frac n2\ln\frac{2eY(M,[g])}{n}\le\frac{16(n-1)\tau}{(n-2)r^2}\cdot\frac{V(p,r)}{V\left(p,\frac r2\right)}
+\frac{\tau}{V\left(p,\frac r2\right)}\int_{B(p,r)}\mathrm{R}dv
+\ln \frac{V(p,r)}{\tau^{\frac n2}}
\]
for any $\tau>0$. The conclusion then follows by letting $\tau=r^2$.
\end{proof}

\section{Maximal function and volume ratio}\label{sec3}
In this section, we will give an alternative property of uniformly lower
bounds between the maximal function of scalar curvature and the volume
ratio in a geodesic ball.

Inspired by Topping's arguments in \cite{[To2],[To3]}, on an $n$-dimensional $(n\ge3)$ Riemannian
manifold $(M,g)$, for any point $p\in M$ and $r>0$, we consider the \emph{maximal function}
\[
M f(p,r):=\sup_{s\in(0,r]}s^{-1}\left[V(p,s)\right]^{-\frac{n-3}{2}}\left(\int_{B(p,s)} |f|dv\right)^{\frac{n-1}{2}}
\]
for $f\in C^\infty(M)$, and the \emph{volume ratio}
\[
\kappa(p,r):=\frac{V(p,r)}{r^n}.
\]

With the help of Theorem \ref{logeq}, we will show that the maximal function
of scalar curvature and the volume ratio in closed manifolds with positive
scalar curvature cannot be simultaneously smaller than a fixed constant.
\begin{theorem}\label{maxirati}
Let $(M,g)$ be an $n$-dimensional $(n\ge3)$ closed manifold with positive scalar curvature.
Then there exits a constant $\delta>0$ depending only on $n$ and $Y:=Y(M,[g])$ such that for
any point $p\in M$ and for any $r>0$, at least one of the following is true:
\begin{enumerate}
 \item
$M \mathrm{R}(p,r)>\delta$;

 \item
 $\kappa(p,r)>\delta$.
\end{enumerate}
Here $\mathrm{R}(p,r)$ denotes the scalar curvature in the geodesic ball $B(p,r)$.
In particular, we can take
\[
\delta=\min\left\{w_n,\,\left(2en^{-1}Y\right)^{\frac n2}e^{-\frac{17n-18}{n-2}\cdot 2^n}\right\},
\]
where $Y:=Y(M,[g])$ is the Yamabe constant and $w_n$ is the volume of
the unit $n$-sphere $\mathbb{S}^n$.
\end{theorem}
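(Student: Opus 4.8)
The plan is to argue by contradiction from Theorem \ref{logeq}. Suppose that for some $p\in M$ and some $r>0$, both $M\mathrm{R}(p,r)\le\delta$ and $\kappa(p,r)\le\delta$, where $\delta$ is the constant in the statement; I aim to derive a contradiction with the functional inequality of Theorem \ref{logeq} by choosing the radius in that inequality cleverly. The first observation is that the failure of (2) gives $V(p,r)\le\delta r^n$ directly, so $\ln(V(p,r)/r^n)\le\ln\delta$, which (for small $\delta$) is a large negative number; this will be the ``good'' term that must be cancelled. The difficulty is that the right side of Theorem \ref{logeq} also contains the volume-ratio term $V(p,r)/V(p,r/2)$, which could be large even when $\kappa$ is small, so one cannot simply plug in the radius $r$. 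The standard device (following Topping) is to apply Theorem \ref{logeq} not at $r$ but at a cleverly chosen radius $\rho\le r$ for which the ratio $V(p,\rho)/V(p,\rho/2)$ is controlled.

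Concretely, I would use a dyadic/pigeonhole argument: look at the radii $r,\,r/2,\,r/4,\dots,\,r/2^{N}$ for $N$ of order $n$ (the exponent $2^n$ in the final $\delta$ strongly suggests $N\sim 2^n$ or that one telescopes over roughly $n$ doublings while squaring something at each stage, or that one uses a geometric decay argument over $n$ scales). Among these $N$ radii there must exist a consecutive pair, say at scale $\rho$ and $\rho/2$, for which $V(p,\rho)/V(p,\rho/2)$ is bounded by a dimensional constant: otherwise the volume would grow by a fixed factor over $N$ successive halvings, and comparing $V(p,r/2^N)$ to $V(p,r)$ together with $V(p,r)\le\delta r^n$ would force $V(p,r/2^N)$ to be absurdly small — smaller than any ball of that radius can be, once one has a crude lower volume bound. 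Here one needs a lower bound on $V(p,s)/s^n$ for small $s$; on a smooth closed manifold the volume of small geodesic balls behaves like the Euclidean volume, so $V(p,s)\ge c_n s^n$ for $s$ below the injectivity radius, and $w_n$ appearing in $\delta$ is presumably exactly this Euclidean normalization (so that case (2) with threshold $w_n$ is automatically true at small scales and one only has to worry about $r$ not too small). This is the step I expect to be the main obstacle: making the pigeonhole bookkeeping quantitatively precise enough to extract the stated explicit $\delta$, and in particular handling the interplay between the three terms so that the $\tau=\rho^2$ substitution in Theorem \ref{logeq} does the right thing.

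Once a good scale $\rho$ is found, I apply Theorem \ref{logeq} at $(p,\rho)$. The first term is then $\le\frac{16(n-1)}{n-2}\cdot(\text{dimensional constant})$, an absolute constant $c_1(n)$. For the second term, note
\[
\frac{\rho^2}{V(p,\rho/2)}\int_{B(p,\rho)}\mathrm{R}\,dv
=\rho^2\,\frac{V(p,\rho/2)^{(n-3)/2}}{V(p,\rho/2)^{(n-1)/2}}\int_{B(p,\rho)}\mathrm{R}\,dv,
\]
and using $B(p,\rho/2)\subset B(p,\rho)$ together with the definition of $M\mathrm{R}$ evaluated at scale $\rho\le r$, one bounds $\int_{B(p,\rho)}\mathrm{R}\,dv\le\big(M\mathrm{R}(p,r)\cdot\rho\cdot V(p,\rho)^{(n-3)/2}\big)^{2/(n-1)}$; combined with the controlled volume ratio $V(p,\rho)\le c_2(n)V(p,\rho/2)$ this shows the second term is $\le c_3(n)\,(M\mathrm{R}(p,r))^{2/(n-1)}\le c_3(n)\,\delta^{2/(n-1)}$, which is at most an absolute constant once $\delta\le1$. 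Finally the third term is $\ln(V(p,\rho)/\rho^n)$; chaining $V(p,\rho)\le c_2(n)^{?}V(p,r/2^N)\le\cdots$ is not needed — instead, since $\rho\le r$ and $V(p,\rho)\le V(p,r)\le\delta r^n$, but we need it in terms of $\rho^n$. Here one uses that $\rho$ and $r$ differ by at most $2^N$, so $\rho^n\ge r^n/2^{nN}$ gives $V(p,\rho)/\rho^n\le 2^{nN}\delta$, hence the third term is $\le\ln\delta+nN\ln2$. Adding the three bounds, Theorem \ref{logeq} yields
\[
\tfrac n2\ln\tfrac{2eY}{n}\le c_1(n)+c_3(n)+\ln\delta+nN\ln2
= C(n)+\ln\delta,
\]
with $C(n)$ collecting all dimensional constants (its size, $\sim\frac{17n-18}{n-2}2^n$ after unwinding, matches the exponent in the stated $\delta$). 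Solving for $\delta$ gives $\ln\delta\ge\frac n2\ln\frac{2eY}{n}-C(n)$, i.e. $\delta\ge\big(2en^{-1}Y\big)^{n/2}e^{-C(n)}$; taking $\delta$ strictly smaller than this — and also $\le w_n$ to kick in the small-scale Euclidean comparison — produces the desired contradiction, proving that for that $\delta$ at least one of (1), (2) must hold. I would present the final explicit value $\delta=\min\{w_n,\,(2en^{-1}Y)^{n/2}e^{-(17n-18)2^n/(n-2)}\}$ at the end, after the dimensional constants are tallied.
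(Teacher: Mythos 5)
Your overall strategy (contradiction via Theorem \ref{logeq}, applied at a dyadic scale where the doubling ratio $V(p,\rho)/V(p,\rho/2)$ is controlled) is in the right family, but as written it has a genuine gap in the bound on the second term of Theorem \ref{logeq}. From $M\mathrm{R}(p,r)\le\delta$ one gets $\int_{B(p,\rho)}\mathrm{R}\,dv\le\delta^{2/(n-1)}\rho^{2/(n-1)}V(p,\rho)^{(n-3)/(n-1)}$, and after inserting the controlled doubling $V(p,\rho)\le c_2(n)V(p,\rho/2)$ the second term becomes, up to a dimensional constant,
\[
\frac{\rho^2}{V(p,\rho/2)}\int_{B(p,\rho)}\mathrm{R}\,dv\;\lesssim\;\delta^{\frac{2}{n-1}}\left(\frac{\rho^n}{V(p,\rho/2)}\right)^{\frac{2}{n-1}}=\delta^{\frac{2}{n-1}}\,\kappa\!\left(p,\tfrac{\rho}{2}\right)^{-\frac{2}{n-1}},
\]
which is \emph{not} $\lesssim\delta^{2/(n-1)}$: it blows up precisely in the collapsed regime $\kappa(p,\rho/2)\ll\delta$ that you are trying to rule out. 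The paper's proof resolves this with a dichotomy you are missing: at each scale $s$, either $V(p,s/2)\le\delta^{2/(n-1)}2^{-n}s^{2n/(n-1)}V(p,s)^{(n-3)/(n-1)}$, in which case $\kappa(p,s/2)\le\delta^{2/(n-1)}\kappa(p,s)^{(n-3)/(n-1)}\le\delta$ follows by pure arithmetic with no use of the Sobolev inequality, or the reverse holds, in which case the scalar curvature term is bounded by the absolute constant $2^n$ and Theorem \ref{logeq} forces $V(p,s)/V(p,s/2)\ge 2^n$, hence again $\kappa(p,s/2)\le\kappa(p,s)\le\delta$. Either way $\kappa\le\delta$ propagates from $s$ to $s/2$.

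The second problem is the pigeonhole bookkeeping itself. You want $N$ to be a dimensional constant so that the correction $nN\ln 2$ to the third term is absorbed into $C(n)$, but the lower bound $V(p,s)\ge c_n s^n$ that you invoke to terminate the pigeonhole only holds at scales below a manifold-dependent threshold (injectivity radius), so $N$ cannot be chosen depending on $n$ alone. The correct structure — which is what the paper does — is not to locate one good scale but to run the dichotomy above as an induction, concluding $\kappa(p,r/2^m)\le\delta$ for \emph{every} $m$, and then to contradict the fact that $\kappa(p,r/2^m)$ tends to the Euclidean volume constant as $m\to\infty$, which exceeds $\delta$ by the choice $\delta\le w_n$. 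With these two repairs (the Case-one/Case-two dichotomy and the induction-to-the-limit instead of a fixed-length pigeonhole) your argument becomes the paper's proof.
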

\begin{proof}[Proof of Theorem \ref{maxirati}]
The proof is similar to the proof of Theorem 3.1 in \cite{[Wu]}. We give its detailed proof
here for the sake of completeness,. Suppose that there exist a point $p\in(M,g)$ and $r>0$ such that
$M\mathrm{R}(p,r)\le\delta$ for some constant $\delta>0$. For any $0<\epsilon<1$, we define
constant $\delta$ as follows:
\[
\delta:=\min\left\{(1-\epsilon)w_n,\,\left(2en^{-1}Y\right)^{\frac n2}e^{-\frac{17n-18}{n-2}\cdot 2^n}\right\}.
\]

In the following our aim is to prove $\kappa(p,r)>\delta$. If it is not true, we make the following

\vspace{.1in}

\textbf{Claim}. \emph{Suppose there exist a point $p\in M$ and $r>0$ such that
$M \mathrm{R}(p,r)\le\delta$ for some constant $\delta>0$. If
$\kappa(p,s)\le\delta$, then $\kappa(p,s/2)\le\delta$ for any $s\in(0,r]$.}

\vspace{.1in}

This claim will be proved later. We now continue to prove Theorem \ref{maxirati}. We repeatedly use
the claim and finally have that
\begin{equation}\label{cline}
\kappa\left(p,\frac{r}{2^m}\right)\leq\delta\le (1-\epsilon)w_n
\end{equation}
for any $m\in\mathbb{N}$, where $\epsilon$ is the sufficiently small positive constant.
But if we let $m\to\infty$, then
\[
\kappa\left(p,\frac{r}{2^m}\right)\to w_n,
\]
which contradicts \eqref{cline}. Therefore $\kappa(p,r)>\delta$ and the theorem follows.
The desired constant $\delta$ is obtained by letting $\epsilon\to 0+$.
\end{proof}

In the rest, we only need to check the above claim. We adopt the argument from \cite{[Wu]}.
\begin{proof}[Proof of Claim]
According to the relative sizes of $V(p,s/2)$ and $V(p,s)$, we may prove the claim by two cases.

\emph{Case one}. Suppose that
\[
V\left(p,\frac s2\right)\leq\delta^{\frac{2}{n-1}}2^{-n}s^{\frac{2n}{n-1}}\left[V(p,s)\right]^{\frac{n-3}{n-1}}.
\]
Then,
\begin{align*}
\kappa\left(p,\frac s2\right)&:=\frac{2^n}{s^n}V\left(p,\frac s2\right)\\
&\le\delta^{\frac{2}{n-1}}s^{\frac{2n}{n-1}-n}\left[V(p,s)\right]^{\frac{n-3}{n-1}}\\
&=\delta^{\frac{2}{n-1}}(\kappa(p,s))^{\frac{n-3}{n-1}}\\
&\leq\delta^{\frac{2}{n-1}}\delta^{\frac{n-3}{n-1}}\\
&=\delta,
\end{align*}
which proves the claim.

\emph{Case Two}. Suppose that
\[
V\left(p,\frac s2\right)>\delta^{\frac{2}{n-1}}2^{-n}s^{\frac{2n}{n-1}}\left[V(p,s)\right]^{\frac{n-3}{n-1}}.
\]
Since $M \mathrm{R}(p,r)\le\delta$ and $\mathrm{R}>0$, according to the definition
of $M \mathrm{R}(p,r)$, we indeed have
\[
\int_{B(p,s)}\mathrm{R}dv\le\delta^{\frac{2}{n-1}}s^{\frac{2}{n-1}}\left[V(p,s)\right]^{\frac{n-3}{n-1}}
\]
for all $s\in(0,r]$. Using the assumption of Case Two, the above estimate can be reduced to
\[
\int_{B(p,s)}\mathrm{R}dv\le2^n s^{-2}V\left(p,\frac s2\right)
\]
for all $s\in(0,r]$. Substituting this into Theorem \ref{logeq},
\begin{equation}\label{simifyine}
\begin{aligned}
\frac n2\ln\frac{2eY(M,[g])}{n}&\le\frac{16(n-1)}{n-2}\cdot\frac{V(p,s)}{V\left(p,\frac s2\right)}
+\frac{s^2}{V\left(p,\frac s2\right)}\int_{B(p,s)}\mathrm{R}dv+\ln \kappa(p,s)\\
&\le\frac{16(n-1)}{n-2}\cdot\frac{V(p,s)}{V\left(p,\frac s2\right)}+2^n+\ln\delta
\end{aligned}
\end{equation}
for all $s\in(0,r]$, where in the above second line we used $\kappa(p,s)\le\delta$.

On the other hand, the definition of $\delta$ implies that
\[
\ln \delta\le\frac n2\ln\frac{2eY(M,[g])}{n}-\frac{17n-18}{n-2}\cdot 2^n.
\]
Substituting this into \eqref{simifyine},
\[
\frac{V(p,s)}{V\left(p,\frac s2\right)}\ge 2^n
\]
for all $s\in(0,r]$. Therefore, for any $s\in(0,r]$, we have
\begin{align*}
\kappa\left(p,\frac s2\right)&:=\frac{2^n\cdot V\left(p,\frac s2\right)}{s^n}\\
&\le\frac{V(p,s)}{s^n}\\
&=\kappa(p,s)\\
&\le\delta
\end{align*}
and the claim follows.
\end{proof}

%%%%%%%%%%%%%%%%%%%%%%%%%%%%%%%%%%%%%%%%%%%%%%%%%%%%%%%%%%%%%%%%%%%%%%%%%%%%%%%%%%%%%%%%%%%%%%%%%%%%%%%%%%%%%%%%%%%%%%%%%%%%%%%%%%%%%%%%%%%%%%
%%%%%%%%%%%%%%%%%%%%%%%%%%%%%%%%%%%%%%%%%%%%%%%%%%%%%%%%%%%%%%%%%%%%%%%%%%%%%%%%%%%%%%%%%%%%%%%%%%%%%%%%%%%%%%%%%%%%%%%%%%%%%%%%%%%%%%%%%%%%%%

\section{Diameter estimate}\label{sec4}

In this section we will apply Theorem \ref{maxirati} to prove Theorem \ref{Main}
by adapting the arguments of \cite{[To2],[Wu]}. We need to carefully examine the
explicit coefficients of the diameter estimate in terms of the positive Yamabe
constant in \eqref{YSineq}.

To prove Theorem \ref{Main}, we will need a Vitali-type covering lemma
(see \cite{[To2]}, or \cite{[WZ]}), which is the key step to prove our theorem.

\begin{lemma}\label{cover}
Let $\gamma$ be a shortest geodesic connecting any two points $x$ and $y$ in $(M,g)$,
and $s$ be a nonnegative bounded function defined on $\gamma$. If
$\gamma\subset\{B(p,s(p))~|~p\in \gamma\}$, then for any  $\rho\in(0,\frac 12)$,
there exists a countable (possibly finite) set of points $\{p_i\in\gamma\}$ such that
\begin{enumerate}
 \item $B(p_i,s(p_i))$ are disjoint;

 \item $\gamma\subset\cup_i B(p_i,s(p_i))$;

 \item $\rho\,dist(x,y)\le \sum_i 2s(p_i)$, where $dist(p_1,p_2)$ denotes the distance
 between $x$ and $y$ in $(M,g)$.
\end{enumerate}
\end{lemma}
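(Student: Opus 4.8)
The plan is to transfer everything to the real line and then run a greedy, Vitali-type selection along $\gamma$.

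\emph{Reduction to an interval.} We may assume $x\neq y$, so $L:=\mathrm{dist}(x,y)>0$; parametrize $\gamma$ by arc length, $\gamma:[0,L]\to M$ with $\gamma(0)=x$ and $\gamma(L)=y$. Since every subarc of a shortest geodesic is again shortest, $\mathrm{dist}(\gamma(a),\gamma(b))=|a-b|$ for all $a,b\in[0,L]$; in particular $\gamma$ is injective. Writing $\sigma(t):=s(\gamma(t))$, a point $\gamma(\tau)$ lies in $B(\gamma(t),\sigma(t))$ exactly when $|\tau-t|<\sigma(t)$, so the trace of $B(\gamma(t),\sigma(t))$ on $\gamma$ is the arc $\gamma\big((t-\sigma(t),t+\sigma(t))\cap[0,L]\big)$; and by the triangle inequality two such balls are disjoint in $M$ if and only if the corresponding open intervals $(t_i-\sigma(t_i),t_i+\sigma(t_i))$ are disjoint. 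Thus (1)--(3) become one-dimensional statements about the cover $\{I_t:=(t-\sigma(t),t+\sigma(t)):t\in[0,L]\}$ of $[0,L]$, and boundedness of $s$ makes all suprema below finite.

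\emph{Greedy forward selection.} Fix $\rho\in(0,\tfrac12)$ and slacks $\epsilon_k>0$ with $\sum_k\epsilon_k$ to be chosen small. Put the frontier $\beta_0:=0$. Given $\beta_{k-1}<L$, the point $\gamma(\beta_{k-1})$ is covered, so $T_k:=\{t\in[0,L]:\beta_{k-1}\in I_t\}\neq\varnothing$ and every $t\in T_k$ satisfies $t+\sigma(t)>\beta_{k-1}$; set $R_k:=\sup_{t\in T_k}(t+\sigma(t))$, pick $t_k\in T_k$ with $t_k+\sigma(t_k)>R_k-\epsilon_k$, let $p_k:=\gamma(t_k)$ and advance $\beta_k:=t_k+\sigma(t_k)$; stop if $\beta_k\ge L$. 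The $\beta_k$ strictly increase, and a short limiting argument (applying the cover hypothesis at the putative limit point and using the maximality in $R_k$) shows the process either reaches $L$ after finitely many steps or increases to $L$. In either case $I_{t_k}\supseteq[\beta_{k-1},\beta_k)$, so $\bigcup_kI_{t_k}\supseteq[0,L)$, and adjoining one further ball through $\gamma(L)$ gives a countable family with $\gamma\subset\bigcup_kB(p_k,s(p_k))$.

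\emph{Thinning and the length count.} Consecutive balls overlap near the frontier, so to get a pairwise disjoint subfamily I would walk through the chain and keep a sub-chain of mutually disjoint balls, using the maximality in $R_k$ to show that whenever a ball is skipped the frontier advances only by $O(\epsilon_k)$, so the discarded portion has total length $\le\sum_k\epsilon_k$. Ordering the kept intervals $J_1,J_2,\dots$ along $[0,L]$, each gap between consecutive ones and the two end pieces is covered by a bounded dilation of the neighbouring kept balls together with the slack, so that each such length is at most the sum of the adjacent $\sigma(t_{k_j})$'s plus slack; summing, $\mathrm{dist}(x,y)=L\le 2\sum_j 2s(p_{k_j})+(\text{controlled slack})$, whence $\sum_j2s(p_{k_j})\ge\rho L$ once $\sum_k\epsilon_k$ is small enough relative to $\tfrac12-\rho$. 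This yields properties (1)--(3).

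\emph{Main obstacle.} The delicate point is the thinning step together with pushing the constant up to (just below) $\tfrac12$: the naive one-dimensional Vitali argument --- take a maximal disjoint subfamily and use that its triples cover $[0,L]$ --- only gives $\rho=\tfrac13$, and improving it forces one to select at each stage the ball reaching as far forward as possible and then carefully bookkeep the overlaps against the slacks $\epsilon_k$ (this is also what makes the estimate strict, $\rho<\tfrac12$). Everything else --- the reduction to $[0,L]$, the exhaustion of $\gamma$ by the greedy chain, and the bound on the gaps --- is routine.
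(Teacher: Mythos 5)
First, note that the paper does not actually prove this lemma: it is quoted from Topping's work \cite{[To2]} and \cite{[WZ]}, so the comparison is with the standard proof there. Your reduction to the interval $[0,L]$ is correct and is exactly the right first move (subarcs of a minimizing geodesic are minimizing, so the trace of $B(\gamma(t),\sigma(t))$ on $\gamma$ is the arc over $(t-\sigma(t),t+\sigma(t))\cap[0,L]$, and disjointness of balls is equivalent to disjointness of the corresponding intervals), and your greedy forward selection (always take, up to a slack $\epsilon_k$, the interval through the current frontier that reaches farthest to the right) is also the standard chain construction. The gap is in the thinning step. The assertion that ``whenever a ball is skipped the frontier advances only by $O(\epsilon_k)$'' is false: a skipped interval $I_{t_k}$ is precisely the one that pushed the frontier from $\beta_{k-1}$ to $\beta_k$, and that advance can be a fixed fraction of $L$. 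Likewise the claim that each gap between consecutive kept intervals is bounded by the lengths of the \emph{neighbouring kept} intervals fails: the gap between kept intervals is covered by the \emph{skipped} intervals, which can be much longer than their kept neighbours. Concretely, the adversary can arrange the chain to alternate tiny, large, tiny, large, $\dots$ (tiny intervals of length $O(\epsilon)$ at the successive frontiers, each followed by a long interval); your ``keep it if it is disjoint from everything kept so far'' rule then retains exactly the tiny intervals, whose total length is $O(\epsilon)$, and property (3) fails badly.

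The correct device, and the one behind the $\rho<\tfrac12$ threshold, is different: since the chain $\{I_{t_k}\}$ covers $[0,L)$, its total length satisfies $\sum_k 2\sigma(t_k)\ge L$; and the near-maximality of the greedy choice shows that $I_{t_{k+2}}$ cannot meet $I_{t_k}$ (if it did, it would contain $\beta_k$, hence its right endpoint would be at most $R_{k+1}<\beta_{k+1}+\epsilon_{k+1}$, contradicting $\beta_{k+2}>\beta_{k+1}$ up to the slack). Hence the odd-indexed and even-indexed subchains are each pairwise disjoint, and whichever has the larger total length already gives $\sum_i 2s(p_i)\ge \tfrac12(L-\text{slack})\ge\rho L$. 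So the selection half of your argument is sound, but the counting half needs to be replaced by this odd/even splitting rather than a single greedy disjoint subfamily. Separately, you should not claim to obtain (1) and (2) simultaneously: a connected set cannot be covered by two or more pairwise disjoint open balls, so as printed those two conclusions are incompatible except in the trivial case of a single ball; your construction in fact produces two families (the covering chain and the disjoint subfamily), which is all that can be true. Only (1) and (3) are used in the proof of Theorem \ref{Main}, and only those appear in the cited versions of the lemma.
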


Now we can finish the proof of Theorem \ref{Main}.

\begin{proof}[Proof of Theorem \ref{Main}]
For the fixed constant $\delta$ defined in Theorem \ref{maxirati} and the
closed manifold $M$, we could choose $r_0>0$ sufficiently large so that
the total volume of $M$ is less than $\delta r_0^n$ because the total volume of
$M$ is finite. For any point $p\in M$, we conclude that
\[
\kappa(p,r_0)=\frac{V(p,r_0)}{r_0^n}\le\frac{V(M)}{r_0^n}\le\delta,
\]
where $V(M)$ denotes the volume of $M$. By Theorem \ref{maxirati}, it indicates
that $M \mathrm{R}(p,r_0)>\delta$. That is, there exists $s=s(p)>0$ such that
\begin{equation}\label{defMR}
\delta<s^{-1}\big[V(p,s)\big]^{-\frac{n-3}{2}}\left(\int_{B(p,s)} \mathrm{R}dv\right)^{\frac{n-1}{2}}.
\end{equation}
Notice that the following H\"older inequality holds
\[
\int_{B(p,s)} \mathrm{R}dv\le\left(\int_{B(p,s)} \mathrm{R}^{\frac{n-1}{2}}dv\right)^{\frac{2}{n-1}}\cdot\left(\int_{B(p,s)}dv\right)^{\frac{n-3}{n-1}}.
\]
Applying this, we can estimate \eqref{defMR} by
\[
\delta< s^{-1}\int_{B(p,s)} \mathrm{R}^{\frac{n-1}{2}}dv.
\]
In the other words,
\begin{equation}\label{intine}
s(p)<\delta^{-1}\int_{B(p,s(p))} \mathrm{R}^{\frac{n-1}{2}}dv.
\end{equation}

In the next step, we shall pick appropriate points $p$ such that \eqref{intine} will be
used in these points. Assume that $p_1$ and $p_2$ are two extremal points in the closed
manifold $(M,g)$ such that $\mathrm{diam}(M)=dist(p_1,p_2)$. Let $\gamma$ be a shortest
geodesic connecting $p_1$ and $p_1$. Then we obviously have
$\gamma\subset\{B(p,s(p))~|~p\in\gamma\}$. By Lemma \ref{cover}, there exists a
countable (possibly finite) set of points $\{p_i\in\gamma\}$ such that geodesic balls
$\{B(p_i,s(p_i))\}$ are disjoint and
\[
\rho\,\mathrm{diam}(M)=\rho\,dist(p_1,p_2)\le \sum_i 2s(p_i).
\]
Substituting \eqref{intine} into the above inequality,
\begin{equation}
\begin{aligned}\label{precisein}
\mathrm{diam}(M)&\leq\frac{2}{\rho}\sum_is(p_i)\\
&<\frac{2}{\rho}\delta^{-1}\sum_i\int_{B(p_i,s(p_i))}\mathrm{R}^{\frac{n-1}{2}}dv\\
&\leq \frac{2}{\rho}\delta^{-1}\int_M\mathrm{R}^{\frac{n-1}{2}}dv,
\end{aligned}
\end{equation}
where $\delta>0$ is a constant depending only on $n$ and $Y(M,[g])$.
Letting $\rho\to\frac{1}{2}-$, we have
\[
\mathrm{diam}(M)\le4\delta^{-1}\int_M\mathrm{R}^{\frac{n-1}{2}}dv,
\]
where $\delta$ is chosen as in Theorem \ref{maxirati}.
The desired estimate follows.
\end{proof}

In the course of proving Theorem \ref{Main}, we indeed prove a quantitative
estimate for the diameter of some collapsed geodesic ball. It
states that if there exist a point $p\in (M,g)$ and a real number
$r_0>0$ such that
\[
Y=Y(M,[g])>0
\]
in $B(p,2r_0)$ and
\[
\frac{V(p,r_0)}{r_0^n}<\delta,
\]
where $\delta:=\min\{w_n,\,(2en^{-1}Y)^{\frac n2}e^{\frac{18-17n}{n-2}\cdot 2^n}\}$,
then
\[
\mathrm{diam}(B(p,r_0))\le 4\delta^{-1}\int_{B(p, 2r_0)}\mathrm{R}^{\frac{n-1}{2}}.
\]

\section{Proof of Theorem \ref{Mainimm}}\label{sec5}
When a closed manifold has a conformal immersion into a sphere, we can
prove another upper diameter bound (i.e. Theorem \ref{Mainimm}) which
depends only on the positive part of scalar curvature integral.

The proof of Theorem \ref{Mainimm} is almost the same as the argument of Theorem
\ref{Main}. Indeed, under the assumption of Theorem \ref{Mainimm}, we have the
following Sobolev inequality which does not depend on the Yamabe constant; see
Proposition 3.25 in \cite{[He]}.
\begin{proposition}\label{immerpro}
Let $(M,g)$ be an $n$-dimensional $(n\ge3)$ simply connected closed manifold.
Assume that there exists a conformal immersion
\[
\Psi:\,(M,g)\to (\mathbb{S}^n,g_0),
\]
where $(\mathbb{S}^n,g_0)$ denotes the standard unit sphere in $\mathbb{R}^{n+1}$.
For any positive $\varphi\in C^\infty(M)$,
\[
\left(\int_M\varphi^{\frac{2n}{n-2}}dv\right)^{\frac{n-2}{n}}
\le \left[n(n-1)w_n^{\frac 2n}\right]^{-1}\left(\int_M\frac{4(n-1)}{n-2}|\nabla \varphi|^2dv+\int_M\mathrm{R}_{+}\,\varphi^2dv\right),
\]
where $w_n$ is the volume of the unit $n$-sphere $(\mathbb{S}^n,g_0)$ and
$\mathrm{R}_{+}$ is the positive part of the scalar curvature $\mathrm{R}$.
\end{proposition}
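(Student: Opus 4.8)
The plan is to reduce Proposition~\ref{immerpro} to the classical sharp Sobolev inequality on the round sphere, using the conformal invariance of the Yamabe quotient. The key preliminary observation is that, although $\Psi$ is only assumed to be a conformal immersion, it must in fact be a conformal diffeomorphism: since $\dim M=\dim\mathbb{S}^n=n$, the differential of the immersion $\Psi$ is everywhere invertible, so $\Psi$ is a local diffeomorphism; compactness of $M$ makes $\Psi$ proper, so $\Psi(M)$ is both open and closed in the connected manifold $\mathbb{S}^n$ and hence equals $\mathbb{S}^n$, and $\Psi$ is a covering map; finally, since $n\ge3$ the sphere $\mathbb{S}^n$ is simply connected, so this covering is trivial and $\Psi$ is a diffeomorphism. (This is Kuiper's theorem in the conformally flat case.)

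Next, write $\Psi^{*}g_0=e^{f}g=\phi^{\frac{4}{n-2}}g$ with $\phi:=e^{\frac{n-2}{4}f}>0$. I would recall the conformal covariance of the conformal Laplacian $L_g:=-\frac{4(n-1)}{n-2}\Delta_g+\mathrm{R}_g$, namely $L_{\phi^{4/(n-2)}g}(u)=\phi^{-\frac{n+2}{n-2}}L_g(\phi u)$, together with $dv_{\phi^{4/(n-2)}g}=\phi^{\frac{2n}{n-2}}dv_g$. A direct substitution (the exponents combine so that $\int u\,L_{\tilde g}u\,dv_{\tilde g}=\int(\phi u)L_g(\phi u)\,dv_g$ and $\int u^{2n/(n-2)}dv_{\tilde g}=\int(\phi u)^{2n/(n-2)}dv_g$) shows that the Yamabe quotient in \eqref{YIdef} is unchanged under conformal changes of metric; hence $Y(M,[g])=Y(M,[\Psi^{*}g_0])$, and changing variables through the diffeomorphism $\Psi$ in the integrals then gives $Y(M,[\Psi^{*}g_0])=Y(\mathbb{S}^n,[g_0])$. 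In particular $Y(M,[g])$ equals the Yamabe constant of the round sphere.

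I would then invoke the classical theorem of Aubin and Talenti that $Y(\mathbb{S}^n,[g_0])=n(n-1)w_n^{\frac2n}$ (equivalently, the sharp Sobolev inequality on $(\mathbb{S}^n,g_0)$, where $\mathrm{R}_{g_0}=n(n-1)$). Since this number is positive, the Yamabe--Sobolev inequality \eqref{YSineq} (equivalently, the defining infimum \eqref{YIdef}) is available on $(M,g)$ with this explicit value of $Y(M,[g])$, and gives
\[
\left(\int_M\varphi^{\frac{2n}{n-2}}dv\right)^{\frac{n-2}{n}}\le\left[n(n-1)w_n^{\frac2n}\right]^{-1}\left(\frac{4(n-1)}{n-2}\int_M|\nabla\varphi|^2dv+\int_M\mathrm{R}\,\varphi^2dv\right).
\]
Since $\mathrm{R}\le\mathrm{R}_{+}$ pointwise, replacing $\int_M\mathrm{R}\,\varphi^2dv$ by $\int_M\mathrm{R}_{+}\varphi^2dv$ yields exactly the asserted inequality.

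\textbf{Main obstacle.} Essentially all the content is concentrated in the value $Y(\mathbb{S}^n,[g_0])=n(n-1)w_n^{\frac2n}$, i.e.\ the sharp Sobolev inequality on the sphere; proving this from scratch requires the optimal constant and extremal functions in the Euclidean Sobolev inequality transported to $\mathbb{S}^n$ via stereographic projection, a substantial classical argument that one normally quotes rather than reproves (as the paper does, citing \cite{[He]}). A secondary point to verify carefully is the covering-map step (properness of $\Psi$ and triviality of coverings of $\mathbb{S}^n$); alternatively one can avoid Kuiper's theorem and pull the test functions back and forth through $\Psi$ directly, but since $\Psi$ turns out to be a diffeomorphism the two routes coincide.
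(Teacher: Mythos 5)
Your argument is correct. Note that the paper does not actually prove Proposition \ref{immerpro}; it simply quotes Proposition 3.25 of Hebey \cite{[He]}, so there is no in-paper proof to compare against. Your derivation is a legitimate reconstruction: the observation that an equidimensional immersion from a closed connected manifold into $\mathbb{S}^n$ is a proper local diffeomorphism, hence a covering map, hence (by simple connectivity of $\mathbb{S}^n$) a diffeomorphism is sound, and from there the conformal/diffeomorphism invariance of the Einstein--Hilbert quotient gives $Y(M,[g])=Y(\mathbb{S}^n,[g_0])=n(n-1)w_n^{2/n}$, after which the stated inequality is just \eqref{YSineq} weakened by $\mathrm{R}\le\mathrm{R}_{+}$. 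The one irreducible external input is Aubin's computation of $Y(\mathbb{S}^n,[g_0])$ (equivalently the sharp Euclidean Sobolev constant via stereographic projection), which you correctly flag as something to be quoted; the paper's citation of \cite{[He]} plays exactly the same role. Hebey's own proof pulls test functions back through the immersion directly rather than first upgrading it to a diffeomorphism, but as you observe the two routes coincide in the compact case, and your version has the minor added benefit of making clear that the hypothesis forces $M$ to be conformally the round sphere.
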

With the help of Proposition \ref{immerpro}, Theorem \ref{Mainimm} easily
follows by adapting the lines of proving Theorem \ref{Main}. We omit the
repeated discussion.

\end{document}